\newcommand{\qdn}{\hspace*{-1.5mm}}
\newcommand{\qqdn}{\hspace*{-2.5mm}}
\newcommand{\xqdn}{\hspace*{-5.0mm}}
\newcommand{\xxqdn}{\hspace*{-10mm}}
\newcommand{\fns}{\footnotesize}
\newcommand{\ffnk}[4]{\left[\qdn\ba{#1}#3\\[0.8mm]#4\ea{\!\Big|\:#2}\right]}
\newcommand{\binm}{\binom}
\newcommand{\be}{\begin{equation}}
\newcommand{\ee}{\end{equation}}
\newcommand{\ba}{\begin{array}}
\newcommand{\ea}{\end{array}}
\newcommand{\bmn}{\begin{eqnarray}}
\newcommand{\emn}{\end{eqnarray}}
\newcommand{\bnm}{\begin{eqnarray*}}
\newcommand{\enm}{\end{eqnarray*}}
\newcommand{\bln}{\begin{subequations}}
\newcommand{\eln}{\end{subequations}}
\newtheorem{thm}{Theorem}
\newtheorem{lemm}[thm]{Lemma}
\newtheorem{entry}{Entry}
\newcommand{\bbtm}[4]{\bibitem{kn:#1}{#2,}~{#3,}~{#4.}}
\newcommand{\cito}[1]{\cite{kn:#1}}
\begin{document} 
{\fns
\title{Pell's equation and series expansions \\for irrational numbers}
\author{$^{1,2}$Chuanan Wei}
\dedicatory{
  $^1$Department of Mathematics\\
  Shanghai Normal University, Shanghai 200234, China\\
  $^2$Department of Information Technology\\
  Hainan Medical College, Haikou 571199, China}
\thanks{Email address: weichuanan78@163.com}

\address{ }
\footnote{\emph{2010 Mathematics Subject Classification}: Primary
65B10 and Secondary 33C20.}

\keywords{Pell's equation; Hypergeometric series; Series expansions
for irrational numbers}

\begin{abstract}
Solutions of Pell's equation and hypergeometric series identities
are used to study series expansions for $\sqrt{p}$ where $p$ are
arbitrary prime numbers. Numerous fast convergent series expansions
for this family of irrational numbers are established.
\end{abstract}

\maketitle\thispagestyle{empty}
\markboth{Chuanan Wei}
         {Series expansions for irrational numbers}

\section{Introduction}
Pell's equation(also called the Pell-Fermat equation) is any
Diophantine equation of the form
 \bmn\label{pell}
  x^2-py^2=1,
 \emn
   where $p$ is a given positive nonsquare integer and integer solutions are sought
for $x$ and $y$. Joseph Louis Lagrange proved that Pell's equation
has infinitely many distinct integer solutions. Furthermore, there
holds the following relation.
\begin{lemm} \label{lemm-a}
Let $s$ be a positive integer. If $(x_1,y_1)$ is the integer
 solution to \eqref{pell}, then $(x_s,y_s)$ with
\bnm\qquad
\begin{cases}
x_s=\sum_{k=0}^{\lfloor\frac{s}{2}\rfloor}\binm{s}{2k}p^ky_1^{2k}x_1^{s-2k},\\
y_s=\sum_{k=0}^{\lfloor\frac{s-1}{2}\rfloor}\binm{s}{1+2k}p^ky_1^{1+2k}x_1^{s-1-2k}
\end{cases}
\enm
  is also the integer
 solution of \eqref{pell}.
\end{lemm}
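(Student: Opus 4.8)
The plan is to interpret the two sums defining $x_s$ and $y_s$ as the rational and $\sqrt{p}$-components of a single power in the quadratic ring $\mathbb{Z}[\sqrt{p}]$. Concretely, I would first expand $(x_1+y_1\sqrt{p})^s$ by the binomial theorem and split the resulting sum according to the parity of the summation index $j$, $0\le j\le s$. The terms with even index $j=2k$ produce $(\sqrt{p})^{2k}=p^k$ and hence contribute to the rational part, whose coefficient is exactly the stated $x_s$; the terms with odd index $j=1+2k$ produce $(\sqrt{p})^{1+2k}=p^k\sqrt{p}$ and hence contribute to the $\sqrt{p}$-part, whose coefficient is precisely $y_s$. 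This establishes the identity $(x_1+y_1\sqrt{p})^s=x_s+y_s\sqrt{p}$.

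Next I would observe that replacing $\sqrt{p}$ by $-\sqrt{p}$ throughout leaves the even-index terms unchanged and flips the sign of the odd-index terms, giving the conjugate identity $(x_1-y_1\sqrt{p})^s=x_s-y_s\sqrt{p}$. Multiplying the two identities and using the hypothesis $x_1^2-py_1^2=1$, I obtain
\[
x_s^2-py_s^2=(x_s+y_s\sqrt{p})(x_s-y_s\sqrt{p})=\big[(x_1+y_1\sqrt{p})(x_1-y_1\sqrt{p})\big]^s=(x_1^2-py_1^2)^s=1,
\]
so that $(x_s,y_s)$ solves \eqref{pell}. Integrality is immediate, since each of the two sums is a finite $\mathbb{Z}$-linear combination of products of the integers $p$, $x_1$, $y_1$ with nonnegative binomial coefficients.

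There is essentially no deep obstacle here; the only point requiring care is the bookkeeping in the parity split, namely confirming that the floor bounds $\lfloor s/2\rfloor$ and $\lfloor(s-1)/2\rfloor$ correctly enumerate all even and all odd indices $j$ in the range $0\le j\le s$, with none double-counted or omitted. Once the two power identities are in hand, the multiplicative structure of the norm form $N(a+b\sqrt{p})=a^2-pb^2$ does all the work, since $N$ is multiplicative and $N(x_1+y_1\sqrt{p})=1$ forces $N(x_s+y_s\sqrt{p})=1^s=1$.
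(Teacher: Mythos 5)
Your proposal is correct and follows essentially the same route as the paper's own proof: binomial expansion of $(x_1\pm y_1\sqrt{p})^s$, a parity split of the index to identify the two sums as the rational and $\sqrt{p}$-parts, and multiplication of the two conjugate expansions together with $x_1^2-py_1^2=1$ to conclude $x_s^2-py_s^2=1$. The only cosmetic difference is that you phrase the final step via multiplicativity of the norm on $\mathbb{Z}[\sqrt{p}]$, whereas the paper multiplies the two expansions directly, which is the same computation.
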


\begin{proof}
Because that $(x_1,y_1)$ is the integer
 solution to \eqref{pell}, we obtain
$$(x_1+\sqrt{p}\,y_1)^s(x_1-\sqrt{p}\,y_1)^s=1.$$
In terms of the binomial theorem
 \bnm
(u+v)^t=\sum_{k=0}^t\binm{t}{k}u^kv^{t-k},
 \enm
we get the following two expansions
 \bnm
 (x_1+\sqrt{p}\,y_1)^s
  &&\xqdn=\,\sum_{k=0}^s\binm{s}{k}(\sqrt{p}y_1)^kx_1^{s-k}\\
  &&\xqdn=\,\sum_{k=0}^{\lfloor\frac{s}{2}\rfloor}\binm{s}{2k}p^ky_1^{2k}x_1^{s-2k}\\
  &&\xqdn+\,\sqrt{p}\sum_{k=0}^{\lfloor\frac{s-1}{2}\rfloor}\binm{s}{1+2k}p^ky_1^{1+2k}x_1^{s-1-2k},\\
  (x_1-\sqrt{p}\,y_1)^s
  &&\xqdn=\,\sum_{k=0}^s\binm{s}{k}(-\sqrt{p}y_1)^kx_1^{s-k}\\
  &&\xqdn=\,\sum_{k=0}^{\lfloor\frac{s}{2}\rfloor}\binm{s}{2k}p^ky_1^{2k}x_1^{s-2k}\\
  &&\xqdn-\,\sqrt{p}\sum_{k=0}^{\lfloor\frac{s-1}{2}\rfloor}\binm{s}{1+2k}p^ky_1^{1+2k}x_1^{s-1-2k}.
  \enm
So we gain
 \bnm
\Bigg\{\sum_{k=0}^{\lfloor\frac{s}{2}\rfloor}\binm{s}{2k}p^ky_1^{2k}x_1^{s-2k}\Bigg\}^2
-p\Bigg\{\sum_{k=0}^{\lfloor\frac{s-1}{2}\rfloor}\binm{s}{1+2k}p^ky_1^{1+2k}x_1^{s-1-2k}\Bigg\}^2=1.
 \enm
This completes the proof of Lemma \ref{lemm-a}.
\end{proof}

The circumference ratio $\pi=3.1415926535\cdots$ is one of the most
important irrational numbers. For centuries, the study of
$\pi$-formulas attracted many persons. Recently, Chu \cito{chu-a},
Liu \cite{kn:liu-a,kn:liu-b} and Wei and Gong \cito{wei-a} gave many
$\pi$-formulas in terms of the hypergeometric method. Different
methods and results can be seen in the papers
\cite{kn:chu-b,kn:guillera-a,kn:guillera-b,kn:guillera-d,kn:guillera-e,kn:wei-b}.
 For historical
notes and introductory informations on this kind of series, the
readers may refer to four surveys
\cite{kn:bailey-a,kn:baruah,kn:guillera-c,kn:zudilin}.

It is well known that $\sqrt{p}$ are irrational numbers when $p$ are
arbitrary prime numbers. Several ones of this family of irrational
numbers are closely related to $\pi$. For example, we have the
following relations:
 \bmn
&&\sin\frac{\pi}{4}=\frac{\sqrt{2}}{2}, \label{equation-a}\\
&&\sin\frac{\pi}{3}=\frac{\sqrt{3}}{2},  \label{equation-b}\\
&&\sin\frac{\pi}{10}=\frac{\sqrt{5}-1}{4}, \label{equation-c}\\
&&\frac{\sqrt{7}+1}{6\sqrt{3}}\pi
=\sum_{k=0}^{\infty}\frac{(-1)^{\binm{k}{2}}}{2k+1}\bigg(\frac{3}{4+\sqrt{7}}\bigg)^k, \label{equation-d}\\
&&\frac{\pi}{3\sqrt{3}}=\sum_{k=0}^{\infty}\frac{1}{(649+180\sqrt{13})^k}
\bigg\{\frac{\sqrt{13}-3}{6k+1}-\frac{109\sqrt{13}-393}{6k+5}\bigg\},\label{equation-e}
 \emn
where \eqref{equation-a}-\eqref{equation-c} are proverbial and
\eqref{equation-d}, \eqref{equation-e} can be seen in Wei
\cito{wei-b}.

 For a complex number $x$, define the shifted factorial to be
 \bnm
 (x)_0=1\quad\text{and}\quad(x)_n=x(x+1)\cdots(x+n-1)\quad\text{when}\quad n\in \mathbb{N}.
 \enm
 Following Bailey~\cito{bailey}, define the hypergeometric series by
\[_{1+r}F_s\ffnk{cccc}{z}{a_0,&a_1,&\cdots,&a_r}{&b_1,&\cdots,&b_s}
 \:=\:\sum_{k=0}^\infty\frac{(a_0)_k(a_1)_k\cdots(a_r)_k}{(1)_k(b_1)_k\cdots(b_s)_k}z^k,\]
where $\{a_{i}\}_{i\geq0}$ and $\{b_{j}\}_{j\geq1}$ are complex
parameters such that no zero factors appear in the denominators of
the summand on the right hand side. Then three hypergeometric series
identities can be stated as follows:
 \bmn
 &&\label{expansion-a}
 \xxqdn{_1F_0}\ffnk{cccc}{x}{a}{-}=\frac{1}{(1-x)^a}\quad\text{with}\quad
 |x|<1,\\
 &&\label{expansion-b}
 \xxqdn{_2F_1}\ffnk{cccc}{\frac{4x}{(1+x)^2}}{\frac{1}{2}a,\frac{1}{2}+\frac{1}{2}a}{1+a}
  =(1+x)^a\quad\text{with}\quad
 \bigg|\frac{4x}{(1+x)^2}\bigg|<1,\\
 &&\label{expansion-c}
 \xxqdn{_3F_2}\ffnk{cccc}{\frac{27x}{4(1+x)^3}}{\frac{1}{3}a,\frac{1}{3}+\frac{1}{3}a,\frac{2}{3}+\frac{1}{3}a}
 {\frac{1}{2}+\frac{1}{2}a,1+\frac{1}{2}a}
  =(1+x)^a\quad\text{with}\quad
 \bigg|\frac{27x}{4(1+x)^3}\bigg|<1,
 \emn
where \eqref{expansion-a} is a well-known identity and
\eqref{expansion-b}, \eqref{expansion-c} can be found in Gessel and
Stanton \cito{gessel}.

 Inspired by the works just mentioned, we shall
systematically explore series expansions for $\sqrt{p}$ with $p$
being prime numbers by means of Lemma \ref{lemm-a} and
\eqref{expansion-a}-\eqref{expansion-c}. The structure of the paper
is arranged as follows. We shall establish six theorems in Section
2. Then they and \emph{Mathematica} program are utilized to produce
concrete series expansions for $\sqrt{p}$ in Sections 3-8.

\section{Six Theorems}

\begin{thm}\label{thm-a}
Let $p$ be a positive nonsquare integer and $m,n$ be both positive
integers satisfying $n^2-pm^2=1$. Then
 \bnm
\sqrt{p}=\frac{mp}{n}\sum_{k=0}^{\infty}\frac{(1/2)_k}{k!}\bigg(\frac{1}{pm^2+1}\bigg)^k.
 \enm
\end{thm}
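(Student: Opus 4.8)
The plan is to apply the binomial-series identity \eqref{expansion-a} with the single specialization $a=\frac{1}{2}$ and $x=\frac{1}{pm^2+1}$. Under these choices the left-hand side of \eqref{expansion-a} becomes precisely the series appearing in the theorem,
\bnm
{_1F_0}\ffnk{cccc}{\frac{1}{pm^2+1}}{\frac{1}{2}}{-}
=\sum_{k=0}^{\infty}\frac{(1/2)_k}{k!}\bigg(\frac{1}{pm^2+1}\bigg)^k,
\enm
so the whole problem reduces to evaluating the closed form on the right-hand side and matching it against the stated prefactor $\frac{mp}{n}$.

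First I would verify the convergence hypothesis $|x|<1$ demanded by \eqref{expansion-a}. Since $p$ is a positive nonsquare integer we have $p\geq2$ and $m\geq1$, hence $pm^2+1\geq3$ and $x=\frac{1}{pm^2+1}\in(0,1)$, so the identity is legitimately applicable.

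Next I would simplify the right-hand side $\frac{1}{(1-x)^{1/2}}$. Substituting $x=\frac{1}{pm^2+1}$ gives $1-x=\frac{pm^2}{pm^2+1}$, so that
\bnm
\frac{1}{\sqrt{1-x}}=\sqrt{\frac{pm^2+1}{pm^2}}=\frac{\sqrt{pm^2+1}}{m\sqrt{p}}.
\enm
At this point I would invoke the Pell relation $n^2-pm^2=1$, equivalently $n^2=pm^2+1$; because $n$ is a positive integer, the positive square root is $\sqrt{pm^2+1}=n$, and the expression collapses to $\frac{n}{m\sqrt{p}}$.

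Combining these steps, the series equals $\frac{n}{m\sqrt{p}}$, and multiplying by the prefactor gives $\frac{mp}{n}\cdot\frac{n}{m\sqrt{p}}=\frac{p}{\sqrt{p}}=\sqrt{p}$, which is the claimed identity. There is no genuine obstacle in this argument: it is a clean one-step specialization of \eqref{expansion-a} followed by the Pell substitution. The only points requiring a moment's care are the selection of the positive branch of the square root, guaranteed by the positivity of $m$, $n$, and $p$, and the verification of $|x|<1$, both of which are immediate.
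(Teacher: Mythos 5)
Your proposal is correct and follows essentially the same route as the paper: specialize \eqref{expansion-a} at $a=\frac{1}{2}$, substitute $x=\frac{1}{pm^2+1}$, invoke the Pell relation $n^2=pm^2+1$, and multiply through by $\frac{mp}{n}$. Your explicit checks of the convergence condition $|x|<1$ and of the positive square-root branch are minor additions the paper leaves implicit, but they do not change the argument.
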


\begin{proof}
The case $a=1/2$ of \eqref{expansion-a} reads as
  \bmn \label{expansion-d}
\frac{1}{\sqrt{1-x}}=\sum_{k=0}^{\infty}\frac{(1/2)_k}{k!}x^k\quad\text{with}\quad
 |x|<1.
 \emn
Setting $x=1/(pm^2+1)$ in \eqref{expansion-d}, we achieve
 \bnm
\sqrt{\frac{pm^2+1}{pm^2}}=\sum_{k=0}^{\infty}\frac{(1/2)_k}{k!}\bigg(\frac{1}{pm^2+1}\bigg)^k.
 \enm
When $pm^2+1=n^2$, the last equation becomes
 \bnm
\frac{n}{m}\frac{1}{\sqrt{p}}=\sum_{k=0}^{\infty}\frac{(1/2)_k}{k!}\bigg(\frac{1}{pm^2+1}\bigg)^k.
 \enm
Multiplying both sides by $mp/n$,
 we attain Theorem \ref{thm-a} to finish the proof.
\end{proof}

\begin{thm}\label{thm-b}
Let $p$ be a positive nonsquare integer and $m,n$ be both positive
integers provided that $n^2-pm^2=1$. Then
 \bnm
\sqrt{p}=\frac{n}{m}\sum_{k=0}^{\infty}\frac{(1/2)_k}{k!}\bigg(-\frac{1}{pm^2}\bigg)^k.
 \enm
\end{thm}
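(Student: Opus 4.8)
The plan is to mirror the proof of Theorem \ref{thm-a}, since the two statements differ only in which argument one feeds into the binomial expansion \eqref{expansion-d}. The crucial observation is that the summand $\frac{(1/2)_k}{k!}\left(-\frac{1}{pm^2}\right)^k$ is precisely the $k$-th term of $(1-x)^{-1/2}$ evaluated at $x=-1/(pm^2)$. So I would begin by substituting this value into \eqref{expansion-d} and then simplify the closed-form left-hand side.

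First I would compute that left-hand side. With $x=-1/(pm^2)$ one has $1-x=1+1/(pm^2)=(pm^2+1)/(pm^2)$, whence
$$\frac{1}{\sqrt{1-x}}=\sqrt{\frac{pm^2}{pm^2+1}}.$$
Next I would invoke the Pell relation $n^2-pm^2=1$, that is $pm^2+1=n^2$, to rewrite this as $\sqrt{pm^2/n^2}=m\sqrt{p}/n$ (the positivity of $m$ and $n$ makes the square roots unambiguous). At this stage \eqref{expansion-d} reads $\frac{m\sqrt{p}}{n}=\sum_{k=0}^{\infty}\frac{(1/2)_k}{k!}\left(-\frac{1}{pm^2}\right)^k$, and multiplying both sides by $n/m$ yields the claimed formula.

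The only point left to verify is convergence, which requires $|x|=1/(pm^2)<1$. Since $p$ is a positive nonsquare integer it satisfies $p\geq2$, and $m\geq1$, so $pm^2\geq2>1$ and the series converges. I do not anticipate a genuine obstacle here: the argument is a routine specialization of \eqref{expansion-d} combined with the Pell constraint. The one subtlety worth flagging is the sign of the argument: relative to Theorem \ref{thm-a}, feeding in $-1/(pm^2)$ instead of $1/(pm^2+1)$ places the radicand as $pm^2/n^2$ rather than its reciprocal, which is exactly what produces the prefactor $n/m$ in place of $mp/n$. Keeping track of this inversion is the main thing to get right.
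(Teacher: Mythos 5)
Your proposal is correct and follows essentially the same route as the paper's own proof: substitute $x=-1/(pm^2)$ into \eqref{expansion-d}, invoke $pm^2+1=n^2$ to identify the left-hand side as $m\sqrt{p}/n$, and multiply by $n/m$. Your explicit check of the convergence condition $1/(pm^2)<1$ is a small addition the paper leaves implicit, but it changes nothing of substance.
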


\begin{proof}
Taking $x=-1/pm^2$ in \eqref{expansion-d}, we obtain
 \bnm
\sqrt{\frac{pm^2}{pm^2+1}}=\sum_{k=0}^{\infty}\frac{(1/2)_k}{k!}\bigg(-\frac{1}{pm^2}\bigg)^k.
 \enm
When $pm^2+1=n^2$, the last equation creates
 \bnm
\frac{m}{n}\sqrt{p}=\sum_{k=0}^{\infty}\frac{(1/2)_k}{k!}\bigg(-\frac{1}{pm^2}\bigg)^k.
 \enm
Multiplying both sides by $n/m$,
 we get Theorem \ref{thm-b} to complete the proof.
\end{proof}

\begin{thm}\label{thm-c}
Let $p$ be a positive nonsquare integer and $m,n$ be both positive
integers satisfying $n^2-pm^2=1$. Then
 \bnm
\sqrt{p}=\frac{mp}{n}\sum_{k=0}^{\infty}\frac{(1/4)_k(3/4)_k}{k!(3/2)_k}\bigg\{\frac{4pm^2}{(pm^2+1)^2}\bigg\}^{k}.
 \enm
\end{thm}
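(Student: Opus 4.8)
The plan is to specialize the quadratic hypergeometric identity \eqref{expansion-b} at $a=1/2$ and then choose $x$ so that its argument matches $\frac{4pm^2}{(pm^2+1)^2}$. Setting $a=1/2$ in \eqref{expansion-b}, the two upper parameters become $\frac14,\frac34$ and the lower parameter becomes $\frac32$, so that
\[
\sum_{k=0}^{\infty}\frac{(1/4)_k(3/4)_k}{k!(3/2)_k}\left(\frac{4x}{(1+x)^2}\right)^{k}=(1+x)^{1/2}
\]
whenever $\big|4x/(1+x)^2\big|<1$. This is the natural analogue of the step $a=1/2$ in \eqref{expansion-d} that drove Theorems \ref{thm-a} and \ref{thm-b}.

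The key observation is that the argument $\frac{4x}{(1+x)^2}$ is invariant under $x\mapsto 1/x$: writing $z=\frac{4x}{(1+x)^2}$ gives the quadratic $zx^2+(2z-4)x+z=0$, whose constant term equals its leading coefficient, so the two roots multiply to $1$. The naive choice $x=pm^2$ does produce the target argument, but it yields only the trivial identity $\sqrt{1+pm^2}=n$; instead I would take the reciprocal root $x=1/(pm^2)$, which gives the same argument while placing the desired $\sqrt{p}$ on the left-hand side. A short computation confirms $\frac{4x}{(1+x)^2}=\frac{4pm^2}{(pm^2+1)^2}$ for this choice.

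With $x=1/(pm^2)$ the left-hand side is $\sqrt{1+1/(pm^2)}=\sqrt{(pm^2+1)/(pm^2)}$. Invoking the Pell relation $pm^2+1=n^2$ turns this into $n/(m\sqrt{p})$, so that
\[
\frac{n}{m\sqrt{p}}=\sum_{k=0}^{\infty}\frac{(1/4)_k(3/4)_k}{k!(3/2)_k}\left(\frac{4pm^2}{(pm^2+1)^2}\right)^{k}.
\]
Multiplying through by $mp/n$ and using $p/\sqrt{p}=\sqrt{p}$ then delivers the statement of Theorem \ref{thm-c}.

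I do not expect a serious obstacle here. The only point needing care is the convergence hypothesis $\big|4pm^2/(pm^2+1)^2\big|<1$ of \eqref{expansion-b}: substituting $pm^2=n^2-1$ it reads $4(n^2-1)/n^4<1$, which holds for every $n\ge 2$, and $n\ge 2$ is automatic since $p,m\ge 1$ force $pm^2\ge 1$ and hence $n^2=pm^2+1\ge 2$. Thus the substantive content is entirely in recognizing that one must use the root $x=1/(pm^2)$ rather than $x=pm^2$; everything else is a direct evaluation.
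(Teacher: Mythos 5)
Your proposal is correct and follows essentially the same route as the paper: specialize \eqref{expansion-b} at $a=1/2$, substitute $x=1/(pm^2)$ to produce the argument $4pm^2/(pm^2+1)^2$, apply the Pell relation $pm^2+1=n^2$, and multiply by $mp/n$. Your additional remarks on the $x\mapsto 1/x$ invariance of the argument and the verification that $4(n^2-1)/n^4<1$ are sound refinements of details the paper leaves implicit.
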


\begin{proof}
The case $a=1/2$ of \eqref{expansion-b} gives
  \bmn \label{expansion-e}
\sqrt{1+x}=\sum_{k=0}^{\infty}\frac{(1/4)_k(3/4)_k}{k!(3/2)_k}\bigg\{\frac{4x}{(1+x)^2}\bigg\}^k
\quad\text{with}\quad
 \bigg|\frac{4x}{(1+x)^2}\bigg|<1.
 \emn
Fixing $x=1/pm^2$ in \eqref{expansion-e}, we gain
 \bnm
\sqrt{\frac{pm^2+1}{pm^2}}=\sum_{k=0}^{\infty}\frac{(1/4)_k(3/4)_k}{k!(3/2)_k}\bigg\{\frac{4pm^2}{(pm^2+1)^2}\bigg\}^{k}.
 \enm
When $pm^2+1=n^2$, the last equation produces
 \bnm
\frac{n}{m}\frac{1}{\sqrt{p}}=\sum_{k=0}^{\infty}\frac{(1/4)_k(3/4)_k}{k!(3/2)_k}\bigg\{\frac{4pm^2}{(pm^2+1)^2}\bigg\}^{k}.
 \enm
Multiplying both sides by $mp/n$,
 we achieve Theorem \ref{thm-c} to finish the proof.
\end{proof}

\begin{thm}\label{thm-d}
Let $p$ be a positive nonsquare integer and $m,n$ be both positive
integers provided that $n^2-pm^2=1$ and $p^2m^4>4pm^2+4$. Then
 \bnm
\sqrt{p}=\frac{n}{m}\sum_{k=0}^{\infty}\frac{(1/4)_k(3/4)_k}{k!(3/2)_k}\bigg\{-\frac{4(pm^2+1)}{p^2m^4}\bigg\}^{k}.
 \enm
\end{thm}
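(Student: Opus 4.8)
The plan is to mirror the proofs of Theorems \ref{thm-b} and \ref{thm-c}, applying the $a=1/2$ case of \eqref{expansion-b}—recorded as \eqref{expansion-e}—with a substitution chosen so that the left-hand side collapses to a rational multiple of $\sqrt{p}$ while the series argument becomes exactly $-4(pm^2+1)/(p^2m^4)$.

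First I would identify the correct value of $x$. Since the prefactor in the target identity is $n/m$, just as in Theorem \ref{thm-b}, I want $\sqrt{1+x}=m\sqrt{p}/n$, equivalently $1+x=pm^2/n^2$. Using the hypothesis $n^2=pm^2+1$, this forces
\[
x=\frac{pm^2}{pm^2+1}-1=-\frac{1}{pm^2+1}.
\]
Next I would substitute this $x$ into \eqref{expansion-e}. A direct computation gives $1+x=pm^2/(pm^2+1)$, so that
\[
\frac{4x}{(1+x)^2}=\frac{-4/(pm^2+1)}{p^2m^4/(pm^2+1)^2}=-\frac{4(pm^2+1)}{p^2m^4},
\]
matching the argument in the statement. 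Meanwhile $\sqrt{1+x}=\sqrt{pm^2/(pm^2+1)}=m\sqrt{p}/n$ after invoking $pm^2+1=n^2$. Multiplying both sides by $n/m$ then yields the claimed expansion.

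The one point requiring care is convergence. Identity \eqref{expansion-e} is valid only when $\bigl|4x/(1+x)^2\bigr|<1$, which here reads $4(pm^2+1)/(p^2m^4)<1$, i.e. $p^2m^4>4pm^2+4$. This is precisely the extra hypothesis imposed in Theorem \ref{thm-d}, so the restriction is not a genuine obstacle but simply the translation of the convergence domain of \eqref{expansion-b} into the variables $p$ and $m$. I expect the only nontrivial step to be guessing the substitution $x=-1/(pm^2+1)$; once it is in hand, verifying both the collapse of the left-hand side and the matching of the series argument is a routine calculation, and the convergence hypothesis falls out automatically.
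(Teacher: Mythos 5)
Your proposal is correct and follows exactly the paper's own proof: both substitute $x=-1/(pm^2+1)$ into \eqref{expansion-e}, use $pm^2+1=n^2$ to reduce the left-hand side to $\frac{m}{n}\sqrt{p}$, and multiply by $n/m$, with the hypothesis $p^2m^4>4pm^2+4$ serving as the convergence condition $\bigl|4x/(1+x)^2\bigr|<1$. The only difference is presentational: you motivate the choice of $x$ by working backward from the desired prefactor, which the paper states without explanation.
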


\begin{proof}
Setting $x=-1/(pm^2+1)$ in \eqref{expansion-e}, we attain
 \bnm
\sqrt{\frac{pm^2}{pm^2+1}}=\sum_{k=0}^{\infty}\frac{(1/4)_k(3/4)_k}{k!(3/2)_k}\bigg\{-\frac{4(pm^2+1)}{p^2m^4}\bigg\}^{k}.
 \enm
When $pm^2+1=n^2$, the last equation becomes
 \bnm
\frac{m}{n}\sqrt{p}=\sum_{k=0}^{\infty}\frac{(1/4)_k(3/4)_k}{k!(3/2)_k}\bigg\{-\frac{4(pm^2+1)}{p^2m^4}\bigg\}^{k}.
 \enm
Multiplying both sides by $n/m$,
 we obtain Theorem \ref{thm-d} to complete the proof.
\end{proof}

\begin{thm}\label{thm-e}
Let $p$ be a positive nonsquare integer and $m,n$ be both positive
integers satisfying $n^2-pm^2=1$ and $4(pm^2+1)^3>27p^2m^4$. Then
 \bnm
\sqrt{p}=\frac{mp}{n}\sum_{k=0}^{\infty}\frac{(1/2)_k(1/6)_k(5/6)_k}{k!(3/4)_k(5/4)_k}
\bigg\{\frac{27p^2m^4}{4(pm^2+1)^3}\bigg\}^{k}.
 \enm
\end{thm}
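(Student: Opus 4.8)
The plan is to follow the proof of Theorem \ref{thm-c} line for line, but with the ${}_2F_1$ evaluation \eqref{expansion-b} replaced by the ${}_3F_2$ evaluation \eqref{expansion-c}. First I would specialise \eqref{expansion-c} to $a=1/2$. The three numerator parameters $\frac13a,\ \frac13+\frac13a,\ \frac23+\frac13a$ become $\frac16,\frac12,\frac56$ and the two denominator parameters $\frac12+\frac12a,\ 1+\frac12a$ become $\frac34,\frac54$, while the right-hand side collapses to $\sqrt{1+x}$. This yields
\[\sqrt{1+x}=\sum_{k=0}^{\infty}\frac{(1/2)_k(1/6)_k(5/6)_k}{k!(3/4)_k(5/4)_k}\bigg\{\frac{27x}{4(1+x)^3}\bigg\}^k\quad\text{with}\quad\bigg|\frac{27x}{4(1+x)^3}\bigg|<1,\]
the exact analogue of \eqref{expansion-e}.

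Next I would substitute $x=1/(pm^2)$, just as in Theorem \ref{thm-c}. Writing $1+x=(pm^2+1)/(pm^2)$, the series argument simplifies to $\frac{27/(pm^2)}{4(pm^2+1)^3/(pm^2)^3}=\frac{27p^2m^4}{4(pm^2+1)^3}$, and the convergence condition $|27x/4(1+x)^3|<1$ becomes precisely the hypothesis $4(pm^2+1)^3>27p^2m^4$. The left-hand side is $\sqrt{(pm^2+1)/(pm^2)}$.

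Invoking the Pell relation $pm^2+1=n^2$ then rewrites the left-hand side as $\sqrt{n^2/(pm^2)}=\frac{n}{m}\frac{1}{\sqrt p}$, so that
\[\frac{n}{m}\frac{1}{\sqrt p}=\sum_{k=0}^{\infty}\frac{(1/2)_k(1/6)_k(5/6)_k}{k!(3/4)_k(5/4)_k}\bigg\{\frac{27p^2m^4}{4(pm^2+1)^3}\bigg\}^k.\]
Multiplying both sides by $mp/n$ and using $\frac{mp}{n}\cdot\frac{n}{m\sqrt p}=\sqrt p$ gives the asserted expansion.

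Every step is a routine algebraic specialisation, so I expect no genuine difficulty. The only points requiring care are the bookkeeping in reducing $27x/4(1+x)^3$ under the substitution and the verification that its region of convergence coincides exactly with the stated inequality $4(pm^2+1)^3>27p^2m^4$ — neither of which presents a real obstacle.
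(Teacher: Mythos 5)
Your proposal is correct and follows essentially the same route as the paper: specialise \eqref{expansion-c} at $a=1/2$ to obtain the $\sqrt{1+x}$ expansion, substitute $x=1/(pm^2)$, apply the Pell relation $pm^2+1=n^2$, and multiply by $mp/n$. The parameter bookkeeping ($1/6,1/2,5/6$ above and $3/4,5/4$ below) and the identification of the convergence condition with the hypothesis $4(pm^2+1)^3>27p^2m^4$ are all exactly as in the paper's proof.
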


\begin{proof}
The case $a=1/2$ of \eqref{expansion-c} offers
  \bmn \label{expansion-f}
\sqrt{1+x}=\sum_{k=0}^{\infty}\frac{(1/2)_k(1/6)_k(5/6)_k}{k!(3/4)_k(5/4)_k}\bigg\{\frac{27x}{4(1+x)^3}\bigg\}^k
\quad\text{with}\quad
 \bigg|\frac{27x}{4(1+x)^3}\bigg|<1.
 \emn
Taking $x=1/pm^2$ in \eqref{expansion-f}, we get
 \bnm
\sqrt{\frac{pm^2+1}{pm^2}}=\sum_{k=0}^{\infty}\frac{(1/2)_k(1/6)_k(5/6)_k}{k!(3/4)_k(5/4)_k}
\bigg\{\frac{27p^2m^4}{4(pm^2+1)^3}\bigg\}^{k}.
 \enm
When $pm^2+1=n^2$, the last equation creates
 \bnm
 \frac{n}{m}\frac{1}{\sqrt{p}}=\sum_{k=0}^{\infty}\frac{(1/2)_k(1/6)_k(5/6)_k}{k!(3/4)_k(5/4)_k}
\bigg\{\frac{27p^2m^4}{4(pm^2+1)^3}\bigg\}^{k}.
 \enm
 Multiplying both sides by $mp/n$,
 we gain Theorem \ref{thm-e} to finish the proof.
\end{proof}

\begin{thm}\label{thm-f}
Let $p$ be a positive nonsquare integer and $m,n$ be both positive
integers provided that $n^2-pm^2=1$ and $4p^3m^6>27(pm^2+1)^2$. Then
 \bnm
\sqrt{p}=\frac{n}{m}\sum_{k=0}^{\infty}\frac{(1/2)_k(1/6)_k(5/6)_k}{k!(3/4)_k(5/4)_k}
\bigg\{-\frac{27(pm^2+1)^2}{4p^3m^6}\bigg\}^{k}.
 \enm
\end{thm}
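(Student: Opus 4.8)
The plan is to mirror the proof of Theorem~\ref{thm-e} exactly, since Theorem~\ref{thm-f} stands to Theorem~\ref{thm-e} as Theorem~\ref{thm-d} stands to Theorem~\ref{thm-c}: both exploit the same $a=1/2$ specialization \eqref{expansion-f} of \eqref{expansion-c}, but with complementary choices of the free variable $x$. Where Theorem~\ref{thm-e} took $x=1/pm^2$, here I would instead set $x=-1/(pm^2+1)$, which is the value that turns the summand's base into the desired negative quantity.

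First I would substitute $x=-1/(pm^2+1)$ into \eqref{expansion-f}. The left-hand side becomes $\sqrt{1+x}=\sqrt{pm^2/(pm^2+1)}$, since $1+x=pm^2/(pm^2+1)$. A brief simplification of the summand's base, using $(1+x)^3=p^3m^6/(pm^2+1)^3$, shows that
\[
\frac{27x}{4(1+x)^3}=-\frac{27(pm^2+1)^2}{4p^3m^6},
\]
which is precisely the argument appearing in the claimed expansion.

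Next I would invoke the Pell relation $pm^2+1=n^2$ to rewrite the left-hand side as $\sqrt{pm^2/n^2}=(m/n)\sqrt{p}$, yielding an intermediate identity with $(m/n)\sqrt{p}$ on the left. Multiplying both sides by $n/m$ then isolates $\sqrt{p}$ and produces the stated formula.

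The only point deserving attention is convergence, which is exactly what dictates the extra hypothesis rather than posing any genuine obstacle. The requirement $\big|27x/4(1+x)^3\big|<1$ becomes, after the substitution and simplification above, $27(pm^2+1)^2<4p^3m^6$, i.e. precisely the condition $4p^3m^6>27(pm^2+1)^2$ assumed in the statement. Thus the hypothesis is simply a transcription of the convergence criterion, and once the correct choice $x=-1/(pm^2+1)$ is identified the entire argument is routine.
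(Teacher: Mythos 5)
Your proposal is correct and follows essentially the same route as the paper's own proof: substituting $x=-1/(pm^2+1)$ into \eqref{expansion-f}, invoking $pm^2+1=n^2$, and multiplying by $n/m$. Your explicit check that the convergence requirement $\bigl|27x/4(1+x)^3\bigr|<1$ translates exactly into the hypothesis $4p^3m^6>27(pm^2+1)^2$ is a worthwhile addition, since the paper leaves that verification implicit.
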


\begin{proof}
Fixing $x=-1/(pm^2+1)$ in \eqref{expansion-f}, we achieve
 \bnm
\sqrt{\frac{pm^2}{pm^2+1}}=\sum_{k=0}^{\infty}\frac{(1/2)_k(1/6)_k(5/6)_k}{k!(3/4)_k(5/4)_k}
\bigg\{-\frac{27(pm^2+1)^2}{4p^3m^6}\bigg\}^{k}.
 \enm
When $pm^2+1=n^2$, the last equation produces
 \bnm
\frac{m}{n}\sqrt{p}=\sum_{k=0}^{\infty}\frac{(1/2)_k(1/6)_k(5/6)_k}{k!(3/4)_k(5/4)_k}
\bigg\{-\frac{27(pm^2+1)^2}{4p^3m^6}\bigg\}^{k}.
 \enm
Multiplying both sides by $n/m$,
 we attain Theorem \ref{thm-f} to complete the proof.
\end{proof}
\section{Series expansions for $\sqrt{2}$}
Setting $p=2$ in \eqref{pell}, we obtain
 \bmn\label{pell-a}
  x^2-2y^2=1.
 \emn
It is easy to know that $x_1=3, y_1=2$ is the solution to
\eqref{pell-a}. So
 \bnm\qquad
\begin{cases}
x_s=\sum_{k=0}^{\lfloor\frac{s}{2}\rfloor}\binm{s}{2k}2^{3k}3^{s-2k},\\
y_s=\sum_{k=0}^{\lfloor\frac{s-1}{2}\rfloor}\binm{s}{1+2k}2^{1+3k}3^{s-1-2k}
\end{cases}
\enm
 is also the solution of \eqref{pell-a} thanks to Lemma
 \ref{lemm-a}. Now we choose $x_4=577, y_4=408$ and $x_7=114243, y_7=80782$
 to give 12 series expansions for $\sqrt{2}$.

Substituting p=2, $n=x_4=577$ and $m=y_4=408$ into Theorems
\ref{thm-a}-\ref{thm-f}, we get the following six series expansions
for $\sqrt{2}$\,:
 \bnm
&&\xxqdn\xxqdn\xxqdn\xqdn\qdn\sqrt{2}=\frac{816}{577}\sum_{k=0}^{\infty}\frac{(1/2)_k}{k!}\bigg(\frac{1}{332929}\bigg)^k,\\
&&\xxqdn\xxqdn\xxqdn\xqdn\qdn\sqrt{2}=\frac{577}{408}\sum_{k=0}^{\infty}\frac{(1/2)_k}{k!}\bigg(-\frac{1}{332928}\bigg)^k,\\
&&\xxqdn\xxqdn\xxqdn\xqdn\qdn\sqrt{2}=\frac{816}{577}\sum_{k=0}^{\infty}\frac{(1/4)_k(3/4)_k}{k!(3/2)_k}\bigg(\frac{1331712}{110841719041}\bigg)^{k},\\
&&\xxqdn\xxqdn\xxqdn\xqdn\qdn\sqrt{2}=\frac{577}{408}\sum_{k=0}^{\infty}\frac{(1/4)_k(3/4)_k}{k!(3/2)_k}\bigg(-\frac{332929}{27710263296}\bigg)^{k},\\
&&\xxqdn\xxqdn\xxqdn\xqdn\qdn\sqrt{2}=\frac{816}{577}\sum_{k=0}^{\infty}\frac{(1/2)_k(1/6)_k(5/6)_k}{k!(3/4)_k(5/4)_k}
\bigg(\frac{748177108992}{36902422678601089}\bigg)^{k},\\
&&\xxqdn\xxqdn\xxqdn\xqdn\qdn\sqrt{2}=\frac{577}{408}\sum_{k=0}^{\infty}\frac{(1/2)_k(1/6)_k(5/6)_k}{k!(3/4)_k(5/4)_k}
\bigg(-\frac{110841719041}{5466976319176704}\bigg)^{k}.
 \enm
 Substituting p=2, $n=x_7=114243$ and $m=y_7=80782$ into Theorems
\ref{thm-a}-\ref{thm-f}, we gain the following six series expansions
for $\sqrt{2}$\,:
 \bnm
&&\xqdn\sqrt{2}=\frac{161564}{114243}\sum_{k=0}^{\infty}\frac{(1/2)_k}{k!}\bigg(\frac{1}{13051463049}\bigg)^k,\\
&&\xqdn\sqrt{2}=\frac{114243}{80782}\sum_{k=0}^{\infty}\frac{(1/2)_k}{k!}\bigg(-\frac{1}{13051463048}\bigg)^k,\\
&&\xqdn\sqrt{2}=\frac{161564}{114243}\sum_{k=0}^{\infty}\frac{(1/4)_k(3/4)_k}{k!(3/2)_k}
\bigg(\frac{52205852192}{170340687719412376401}\bigg)^{k},\\
&&\xqdn\sqrt{2}=\frac{114243}{80782}\sum_{k=0}^{\infty}\frac{(1/4)_k(3/4)_k}{k!(3/2)_k}
\bigg(-\frac{13051463049}{42585171923327362576}\bigg)^{k},\\
&&\xqdn\sqrt{2}=\frac{161564}{114243}\sum_{k=0}^{\infty}\frac{(1/2)_k(1/6)_k(5/6)_k}{k!(3/4)_k(5/4)_k}
 \bigg(\frac{42585171923327362576}{82340562648561433725590040987}\bigg)^{k},\\
&&\xqdn\sqrt{2}=\frac{114243}{80782}\sum_{k=0}^{\infty}\frac{(1/2)_k(1/6)_k(5/6)_k}{k!(3/4)_k(5/4)_k}
\bigg(-\frac{4599198568424134162827}{8892780764000546589887393466368}\bigg)^{k}.
 \enm
Numerous different series expansions for $\sqrt{2}$ can be derived
in the same way. Due to limit of space, the corresponding results
will not be displayed in the paper. The discuss is also adapt to
series expansions for $\sqrt{p}$ with $p>2$.
\section{Series expansions for $\sqrt{3}$}
Taking $p=3$ in \eqref{pell}, we have
 \bmn\label{pell-b}
  x^2-3y^2=1.
 \emn
It is not difficult to see that $x_1=2, y_1=1$ is the solution to
\eqref{pell-b}. Thus
 \bnm\qquad
\begin{cases}
x_s=\sum_{k=0}^{\lfloor\frac{s}{2}\rfloor}\binm{s}{2k}3^k2^{s-2k},\\
y_s=\sum_{k=0}^{\lfloor\frac{s-1}{2}\rfloor}\binm{s}{1+2k}3^k2^{s-1-2k}
\end{cases}
\enm
 is also the solution of \eqref{pell-b} according to Lemma
 \ref{lemm-a}. Now we select $x_5=362, y_5=209$ and $x_9=70226, y_9=40545$
 to create 12 series expansions for $\sqrt{3}$.

Substituting p=3, $n=x_5=362$ and $m=y_5=209$ into Theorems
\ref{thm-a}-\ref{thm-f}, we achieve the following six series
expansions for $\sqrt{3}$\,:
 \bnm
&&\xxqdn\xxqdn\xxqdn\xqdn\sqrt{3}=\frac{627}{362}\sum_{k=0}^{\infty}\frac{(1/2)_k}{k!}\bigg(\frac{1}{131044}\bigg)^k,\\
&&\xxqdn\xxqdn\xxqdn\xqdn\sqrt{3}=\frac{362}{209}\sum_{k=0}^{\infty}\frac{(1/2)_k}{k!}\bigg(-\frac{1}{131043}\bigg)^k,\\
&&\xxqdn\xxqdn\xxqdn\xqdn\sqrt{3}=\frac{627}{362}\sum_{k=0}^{\infty}
\frac{(1/4)_k(3/4)_k}{k!(3/2)_k}\bigg(\frac{131043}{4293132484}\bigg)^{k},\\
&&\xxqdn\xxqdn\xxqdn\xqdn\sqrt{3}=\frac{362}{209}\sum_{k=0}^{\infty}
\frac{(1/4)_k(3/4)_k}{k!(3/2)_k}\bigg(-\frac{524176}{17172267849}\bigg)^{k},\\
&&\xxqdn\xxqdn\xxqdn\xqdn\sqrt{3}=\frac{627}{362}\sum_{k=0}^{\infty}
\frac{(1/2)_k(1/6)_k(5/6)_k}{k!(3/4)_k(5/4)_k}
\bigg(\frac{463651231923}{9001428051732736}\bigg)^{k},\\
&&\xxqdn\xxqdn\xxqdn\xqdn\sqrt{3}=\frac{362}{209}\sum_{k=0}^{\infty}
\frac{(1/2)_k(1/6)_k(5/6)_k}{k!(3/4)_k(5/4)_k}
\bigg(-\frac{4293132484}{83344647990241}\bigg)^{k}.
 \enm

Substituting p=3, $n=x_9=70226$ and $m=y_9=40545$ into Theorems
\ref{thm-a}-\ref{thm-f}, we attain the following six series
expansions for $\sqrt{3}$\,:
 \bnm
&&\xqdn\sqrt{3}=\frac{121635}{70226}\sum_{k=0}^{\infty}\frac{(1/2)_k}{k!}\bigg(\frac{1}{4931691076}\bigg)^k,\\
&&\xqdn\sqrt{3}=\frac{70226}{40545}\sum_{k=0}^{\infty}\frac{(1/2)_k}{k!}\bigg(-\frac{1}{4931691075}\bigg)^k,\\
&&\xqdn\sqrt{3}=\frac{121635}{70226}\sum_{k=0}^{\infty}
\frac{(1/4)_k(3/4)_k}{k!(3/2)_k}\bigg(\frac{4931691075}{6080394217274509444}\bigg)^{k},\\
&&\xqdn\sqrt{3}=\frac{70226}{40545}\sum_{k=0}^{\infty}
\frac{(1/4)_k(3/4)_k}{k!(3/2)_k}\bigg(-\frac{19726764304}{24321576859234655625}\bigg)^{k},\\
&&\xqdn\sqrt{3}=\frac{121635}{70226}\sum_{k=0}^{\infty}
\frac{(1/2)_k(1/6)_k(5/6)_k}{k!(3/4)_k(5/4)_k}
\bigg(\frac{656682575199335701875}{479786014398315252276040347904}\bigg)^{k},\\
&&\xqdn\sqrt{3}=\frac{70226}{40545}\sum_{k=0}^{\infty}
\frac{(1/2)_k(1/6)_k(5/6)_k}{k!(3/4)_k(5/4)_k}
\bigg(-\frac{6080394217274509444}{4442463093578299350981890625}\bigg)^{k}.
 \enm

\section{Series expansions for $\sqrt{5}$}
Fixing $p=5$ in \eqref{pell}, we obtain
 \bmn\label{pell-c}
  x^2-5y^2=1.
 \emn
It is ordinary to find that $x_1=9, y_1=4$ is the solution to
\eqref{pell-c}. Therefore
 \bnm\qquad
\begin{cases}
x_s=\sum_{k=0}^{\lfloor\frac{s}{2}\rfloor}\binm{s}{2k}5^{k}4^{2k}9^{s-2k},\\
y_s=\sum_{k=0}^{\lfloor\frac{s-1}{2}\rfloor}\binm{s}{1+2k}5^{k}4^{1+2k}9^{s-1-2k}
\end{cases}
\enm
 is also the solution of \eqref{pell-c} in accordance with Lemma
 \ref{lemm-a}. Now we choose $x_3=2889, y_3=1292$ and $x_4=51841, y_4=23184$
 to produce 12 series expansions for $\sqrt{5}$.

Substituting p=5, $n=x_3=2889$ and $m=y_3=1292$ into Theorems
\ref{thm-a}-\ref{thm-f}, we get the following six series expansions
for $\sqrt{5}$\,:
 \bnm
&&\xxqdn\xxqdn\qqdn\sqrt{5}=\frac{6460}{2889}\sum_{k=0}^{\infty}\frac{(1/2)_k}{k!}\bigg(\frac{1}{8346321}\bigg)^k,\\
&&\xxqdn\xxqdn\qqdn\sqrt{5}=\frac{2889}{1292}\sum_{k=0}^{\infty}\frac{(1/2)_k}{k!}\bigg(-\frac{1}{8346320}\bigg)^k,\\
&&\xxqdn\xxqdn\qqdn\sqrt{5}=\frac{6460}{2889}\sum_{k=0}^{\infty}
\frac{(1/4)_k(3/4)_k}{k!(3/2)_k}\bigg(\frac{33385280}{69661074235041}\bigg)^{k},\\
&&\xxqdn\xxqdn\qqdn\sqrt{5}=\frac{2889}{1292}\sum_{k=0}^{\infty}
\frac{(1/4)_k(3/4)_k}{k!(3/2)_k}\bigg(-\frac{8346321}{17415264385600}\bigg)^{k},\\
&&\xxqdn\xxqdn\qqdn\sqrt{5}=\frac{6460}{2889}\sum_{k=0}^{\infty}
\frac{(1/2)_k(1/6)_k(5/6)_k}{k!(3/4)_k(5/4)_k}
\bigg(\frac{17415264385600}{21533840250758579043}\bigg)^{k},\\
&&\xxqdn\xxqdn\qqdn\sqrt{5}=\frac{2889}{1292}\sum_{k=0}^{\infty}
\frac{(1/2)_k(1/6)_k(5/6)_k}{k!(3/4)_k(5/4)_k}
\bigg(-\frac{1880849004346107}{2325653911149135872000}\bigg)^{k}.
 \enm

Substituting p=5, $n=x_4=51841$ and $m=y_4=23184$ into Theorems
\ref{thm-a}-\ref{thm-f}, we gain the following six series expansions
for $\sqrt{5}$\,:
 \bnm
&&\xxqdn\sqrt{5}=\frac{115920}{51841}\sum_{k=0}^{\infty}\frac{(1/2)_k}{k!}\bigg(\frac{1}{2687489281}\bigg)^k,\\
&&\xxqdn\sqrt{5}=\frac{51841}{23184}\sum_{k=0}^{\infty}\frac{(1/2)_k}{k!}\bigg(-\frac{1}{2687489280}\bigg)^k,\\
&&\xxqdn\sqrt{5}=\frac{115920}{51841}\sum_{k=0}^{\infty}
\frac{(1/4)_k(3/4)_k}{k!(3/2)_k}\bigg(\frac{10749957120}{7222598635489896961}\bigg)^{k},\\
&&\xxqdn\sqrt{5}=\frac{51841}{23184}\sum_{k=0}^{\infty}
\frac{(1/4)_k(3/4)_k}{k!(3/2)_k}\bigg(-\frac{2687489281}{1805649657528729600}\bigg)^{k},\\
&&\xxqdn\sqrt{5}=\frac{115920}{51841}\sum_{k=0}^{\infty}
\frac{(1/2)_k(1/6)_k(5/6)_k}{k!(3/4)_k(5/4)_k}
\bigg(\frac{48752540753275699200}{19410656413844324266481975041}\bigg)^{k},\\
&&\xxqdn\sqrt{5}=\frac{51841}{23184}\sum_{k=0}^{\infty}
\frac{(1/2)_k(1/6)_k(5/6)_k}{k!(3/4)_k(5/4)_k}
\bigg(-\frac{7222598635489896961}{2875652798840967165640704000}\bigg)^{k}.
 \enm

\section{Series expansions for $\sqrt{7}$}
Setting $p=7$ in \eqref{pell}, we have
 \bmn\label{pell-d}
  x^2-7y^2=1.
 \emn
It is easy to know that $x_1=8, y_1=3$ is the solution to
\eqref{pell-d}. So
 \bnm\qquad
\begin{cases}
x_s=\sum_{k=0}^{\lfloor\frac{s}{2}\rfloor}\binm{s}{2k}7^{k}3^{2k}8^{s-2k},\\
y_s=\sum_{k=0}^{\lfloor\frac{s-1}{2}\rfloor}\binm{s}{1+2k}7^{k}3^{1+2k}8^{s-1-2k}
\end{cases}
\enm
 is also the solution of \eqref{pell-d} thanks to Lemma
 \ref{lemm-a}. Now we select $x_3=2024, y_3=765$ and $x_4=32257, y_4=12192$
 to give 12 series expansions for $\sqrt{7}$.

Substituting p=7, $n=x_3=2024$ and $m=y_3=765$ into Theorems
\ref{thm-a}-\ref{thm-f}, we achieve the following six series
expansions for $\sqrt{7}$\,:
 \bnm
&&\xxqdn\xxqdn\xqdn\sqrt{7}=\frac{5355}{2024}\sum_{k=0}^{\infty}\frac{(1/2)_k}{k!}\bigg(\frac{1}{4096576}\bigg)^k,\\
&&\xxqdn\xxqdn\xqdn\sqrt{7}=\frac{2024}{765}\sum_{k=0}^{\infty}\frac{(1/2)_k}{k!}\bigg(-\frac{1}{4096575}\bigg)^k,\\
&&\xxqdn\xxqdn\xqdn\sqrt{7}=\frac{5355}{2024}\sum_{k=0}^{\infty}
\frac{(1/4)_k(3/4)_k}{k!(3/2)_k}\bigg(\frac{4096575}{4195483730944}\bigg)^{k},\\
&&\xxqdn\xxqdn\xqdn\sqrt{7}=\frac{2024}{765}\sum_{k=0}^{\infty}
\frac{(1/4)_k(3/4)_k}{k!(3/2)_k}\bigg(-\frac{16386304}{16781926730625}\bigg)^{k},\\
&&\xxqdn\xxqdn\xqdn\sqrt{7}=\frac{5355}{2024}\sum_{k=0}^{\infty}
\frac{(1/2)_k(1/6)_k(5/6)_k}{k!(3/4)_k(5/4)_k}
\bigg(\frac{453112021726875}{274993887369210363904}\bigg)^{k},\\
&&\xxqdn\xxqdn\xqdn\sqrt{7}=\frac{2024}{765}\sum_{k=0}^{\infty}
\frac{(1/2)_k(1/6)_k(5/6)_k}{k!(3/4)_k(5/4)_k}
\bigg(-\frac{4195483730944}{2546237833204078125}\bigg)^{k}.
 \enm

Substituting p=7, $n=x_4=32257$ and $m=y_4=12192$ into Theorems
\ref{thm-a}-\ref{thm-f}, we attain the following six series
expansions for $\sqrt{7}$\,:
 \bnm
&&\xxqdn\sqrt{7}=\frac{85344}{32257}\sum_{k=0}^{\infty}\frac{(1/2)_k}{k!}\bigg(\frac{1}{1040514049}\bigg)^k,\\
&&\xxqdn\sqrt{7}=\frac{32257}{12192}\sum_{k=0}^{\infty}\frac{(1/2)_k}{k!}\bigg(-\frac{1}{1040514048}\bigg)^k,\\
&&\xxqdn\sqrt{7}=\frac{85344}{32257}\sum_{k=0}^{\infty}
\frac{(1/4)_k(3/4)_k}{k!(3/2)_k}\bigg(\frac{4162056192}{1082669486166374401}\bigg)^{k},\\
&&\xxqdn\sqrt{7}=\frac{32257}{12192}\sum_{k=0}^{\infty}
\frac{(1/4)_k(3/4)_k}{k!(3/2)_k}\bigg(-\frac{1040514049}{270667371021336576}\bigg)^{k},\\
&&\xxqdn\sqrt{7}=\frac{85344}{32257}\sum_{k=0}^{\infty}
\frac{(1/2)_k(1/6)_k(5/6)_k}{k!(3/4)_k(5/4)_k}
\bigg(\frac{7308019017576087552}{1126532810779723715634459649}\bigg)^{k},\\
&&\xxqdn\sqrt{7}=\frac{32257}{12192}\sum_{k=0}^{\infty}
\frac{(1/2)_k(1/6)_k(5/6)_k}{k!(3/4)_k(5/4)_k}
\bigg(-\frac{1082669486166374401}{166893749263957816334352384}\bigg)^{k}.
 \enm

\section{Series expansions for $\sqrt{11}$}
Taking $p=11$ in \eqref{pell}, we obtain
 \bmn\label{pell-e}
  x^2-11y^2=1.
 \emn
It is not difficult to see that $x_1=10, y_1=3$ is the solution to
\eqref{pell-e}. Thus
 \bnm\qquad
\begin{cases}
x_s=\sum_{k=0}^{\lfloor\frac{s}{2}\rfloor}\binm{s}{2k}11^{k}3^{2k}10^{s-2k},\\
y_s=\sum_{k=0}^{\lfloor\frac{s-1}{2}\rfloor}\binm{s}{1+2k}11^{k}3^{1+2k}10^{s-1-2k}
\end{cases}
\enm
 is also the solution of \eqref{pell-e} according to Lemma
 \ref{lemm-a}. Now we choose $x_3=3970, y_3=1197$ and $x_4=79201, y_4=23880$
 to create 12 series expansions for $\sqrt{11}$.

Substituting p=11, $n=x_3=3970$ and $m=y_3=1197$ into Theorems
\ref{thm-a}-\ref{thm-f}, we get the following six series expansions
for $\sqrt{11}$\,:
 \bnm
&&\xxqdn\xxqdn\xqdn\sqrt{11}=\frac{13167}{3970}\sum_{k=0}^{\infty}\frac{(1/2)_k}{k!}\bigg(\frac{1}{15760900}\bigg)^k,\\
&&\xxqdn\xxqdn\xqdn\sqrt{11}=\frac{3970}{1197}\sum_{k=0}^{\infty}\frac{(1/2)_k}{k!}\bigg(-\frac{1}{15760899}\bigg)^k,\\
&&\xxqdn\xxqdn\xqdn\sqrt{11}=\frac{13167}{3970}\sum_{k=0}^{\infty}
\frac{(1/4)_k(3/4)_k}{k!(3/2)_k}\bigg(\frac{15760899}{62101492202500}\bigg)^{k},\\
&&\xxqdn\xxqdn\xqdn\sqrt{11}=\frac{3970}{1197}\sum_{k=0}^{\infty}
\frac{(1/4)_k(3/4)_k}{k!(3/2)_k}\bigg(-\frac{63043600}{248405937288201}\bigg)^{k},\\
&&\xxqdn\xxqdn\xqdn\sqrt{11}=\frac{13167}{3970}\sum_{k=0}^{\infty}
\frac{(1/2)_k(1/6)_k(5/6)_k}{k!(3/4)_k(5/4)_k}
\bigg(\frac{6706960306781427}{15660406535270116000000}\bigg)^{k},\\
&&\xxqdn\xxqdn\xqdn\sqrt{11}=\frac{3970}{1197}\sum_{k=0}^{\infty}
\frac{(1/2)_k(1/6)_k(5/6)_k}{k!(3/4)_k(5/4)_k}
\bigg(-\frac{62101492202500}{145003736614802587137}\bigg)^{k}.
 \enm

Substituting p=11, $n=x_4=79201$ and $m=y_4=23880$ into Theorems
\ref{thm-a}-\ref{thm-f}, we gain the following six series expansions
for $\sqrt{11}$\,:
 \bnm
&&\xxqdn\sqrt{11}=\frac{262680}{79201}\sum_{k=0}^{\infty}\frac{(1/2)_k}{k!}\bigg(\frac{1}{6272798401}\bigg)^k,\\
&&\xxqdn\sqrt{11}=\frac{79201}{23880}\sum_{k=0}^{\infty}\frac{(1/2)_k}{k!}\bigg(-\frac{1}{6272798400}\bigg)^k,\\
&&\xxqdn\sqrt{11}=\frac{262680}{79201}\sum_{k=0}^{\infty}
\frac{(1/4)_k(3/4)_k}{k!(3/2)_k}\bigg(\frac{25091193600}{39347999779588156801}\bigg)^{k},\\
&&\xxqdn\sqrt{11}=\frac{79201}{23880}\sum_{k=0}^{\infty}
\frac{(1/4)_k(3/4)_k}{k!(3/2)_k}\bigg(-\frac{6272798401}{9836999941760640000}\bigg)^{k},\\
&&\xxqdn\sqrt{11}=\frac{262680}{79201}\sum_{k=0}^{\infty}
\frac{(1/2)_k(1/6)_k(5/6)_k}{k!(3/4)_k(5/4)_k}
\bigg(\frac{265598998427537280000}{246822070099948942419850075201}\bigg)^{k},\\
&&\xxqdn\sqrt{11}=\frac{79201}{23880}\sum_{k=0}^{\infty}
\frac{(1/2)_k(1/6)_k(5/6)_k}{k!(3/4)_k(5/4)_k}
\bigg(-\frac{39347999779588156801}{36566232589911843422208000000}\bigg)^{k}.
 \enm

\section{Series expansions for $\sqrt{13}$}
Fixing $p=13$ in \eqref{pell}, we have
 \bmn\label{pell-f}
  x^2-13y^2=1.
 \emn
We can find that $x_1=649, y_1=180$ is the solution to
\eqref{pell-f} by means of the commands of \emph{Mathematica}:
 \bnm
&&M=180;\\
&&aa[m_{-}]:=\sqrt{13m^2+1};\\
&&For[i=1,i<=M,\\
&&\qquad x=aa[i];\\
&&\qquad If[Mod[x,1]==0,Print[\sqrt{13i^2+1}]]\\
&&\qquad i++\\
&&\qquad ]\\
&&649.
 \enm
 Therefore
 \bnm\qquad
\begin{cases}
x_s=\sum_{k=0}^{\lfloor\frac{s}{2}\rfloor}\binm{s}{2k}13^{k}180^{2k}649^{s-2k},\\
y_s=\sum_{k=0}^{\lfloor\frac{s-1}{2}\rfloor}\binm{s}{1+2k}13^{k}180^{1+2k}649^{s-1-2k}
\end{cases}
\enm
 is also the solution of \eqref{pell-f} in accordance with Lemma
 \ref{lemm-a}. Now we select $x_1=649, y_1=180$ and $x_2=842401, y_2=233640$
 to produce 12 series expansions for $\sqrt{13}$.

Substituting p=13, $n=x_1=649$ and $m=y_1=180$ into Theorems
\ref{thm-a}-\ref{thm-f}, we achieve the following six series
expansions for $\sqrt{13}$\,:
 \bnm
&&\xxqdn\xxqdn\xqdn\sqrt{13}=\frac{2340}{649}\sum_{k=0}^{\infty}\frac{(1/2)_k}{k!}\bigg(\frac{1}{421201}\bigg)^k,\\
&&\xxqdn\xxqdn\xqdn\sqrt{13}=\frac{649}{180}\sum_{k=0}^{\infty}\frac{(1/2)_k}{k!}\bigg(-\frac{1}{421200}\bigg)^k,\\
&&\xxqdn\xxqdn\xqdn\sqrt{13}=\frac{2340}{649}\sum_{k=0}^{\infty}
\frac{(1/4)_k(3/4)_k}{k!(3/2)_k}\bigg(\frac{1684800}{177410282401}\bigg)^{k},\\
&&\xxqdn\xxqdn\xqdn\sqrt{13}=\frac{649}{180}\sum_{k=0}^{\infty}
\frac{(1/4)_k(3/4)_k}{k!(3/2)_k}\bigg(-\frac{421201}{44352360000}\bigg)^{k},\\
&&\xxqdn\xxqdn\xqdn\sqrt{13}=\frac{2340}{649}\sum_{k=0}^{\infty}
\frac{(1/2)_k(1/6)_k(5/6)_k}{k!(3/4)_k(5/4)_k}
\bigg(\frac{1197513720000}{74725388357583601}\bigg)^{k},\\
&&\xxqdn\xxqdn\xqdn\sqrt{13}=\frac{649}{180}\sum_{k=0}^{\infty}
\frac{(1/2)_k(1/6)_k(5/6)_k}{k!(3/4)_k(5/4)_k}
\bigg(-\frac{177410282401}{11070349056000000}\bigg)^{k}.
 \enm

Substituting p=13, $n=x_2=842401$ and $m=y_2=233640$ into Theorems
\ref{thm-a}-\ref{thm-f}, we attain the following six series
expansions for $\sqrt{13}$\,:
\bnm
&&\xxqdn\xxqdn\sqrt{13}=\frac{3037320}{842401}\sum_{k=0}^{\infty}\frac{(1/2)_k}{k!}\bigg(\frac{1}{709639444801}\bigg)^k,\\
&&\xxqdn\xxqdn\sqrt{13}=\frac{842401}{233640}\sum_{k=0}^{\infty}\frac{(1/2)_k}{k!}\bigg(-\frac{1}{709639444800}\bigg)^k,\\
&&\xxqdn\xxqdn\sqrt{13}=\frac{3037320}{842401}\sum_{k=0}^{\infty}
\frac{(1/4)_k(3/4)_k}{k!(3/2)_k}\bigg(\frac{2838557779200}{503588141617471525929601}\bigg)^{k},\\
\enm
 \bnm
 &&\sqrt{13}=\frac{842401}{233640}\sum_{k=0}^{\infty}
\frac{(1/4)_k(3/4)_k}{k!(3/2)_k}\bigg(-\frac{709639444801}{125897035404013061760000}\bigg)^{k},\\
&&\sqrt{13}=\frac{3037320}{842401}\sum_{k=0}^{\infty}
\frac{(1/2)_k(1/6)_k(5/6)_k}{k!(3/4)_k(5/4)_k}
\\&&\qquad\qquad\qquad\quad\times\:
\bigg(\frac{3399219955908352667520000}{357366009225789855782108329051454401}\bigg)^{k},\\
&&\sqrt{13}=\frac{842401}{233640}\sum_{k=0}^{\infty}
\frac{(1/2)_k(1/6)_k(5/6)_k}{k!(3/4)_k(5/4)_k}
\\&&\qquad\qquad\qquad\quad\times\:
\bigg(-\frac{503588141617471525929601}{52943112477670976497371561984000000}\bigg)^{k}.
 \enm

In this paper, we establish numerous series expansions for
$\sqrt{p}$ with $p=2,3,5,7,11,13$. When $p$ are other prime numbers,
series expansions for $\sqrt{p}$, which converge fast, can also be
established in the same way.

\textbf{Acknowledgments}

The work is supported by the Natural Science Foundations of China
(Nos. 11301120, 11201241).


\end{document}